\documentclass[12pt]{amsart}
\title[]{Centralizers in $\widetilde A_2$ groups}
\author{Guyan Robertson}
\address{School of Mathematics and Statistics, Newcastle University, Newcastle upon Tyne NE1 7RU, England, UK}
\email{a.g.robertson@ncl.ac.uk}
\subjclass[2010]{Primary 20F67, 51E24}
\keywords{Group, Centralizer, Euclidean Building}

\date{\today}

\usepackage{amsmath}
\usepackage{amscd}
\usepackage{amssymb}

\input{prepictex}
\input{pictex}
\input{postpictex}
\chardef\bslash=`\\ 

\makeatletter
\def\verbatim{\interlinepenalty\@M \@verbatim
  \leftskip\@totalleftmargin\advance\leftskip2pc
  \frenchspacing\@vobeyspaces \@xverbatim}
\makeatother
\hfuzz1pc 

\newtheorem{theorem}{Theorem}[section]

\newtheorem{lemma}[theorem]{Lemma}

\theoremstyle{definition}

\newtheorem{definition}[theorem]{Definition}
\newtheorem{remark}[theorem]{Remark}

\newcommand{\cl}[1]{{\mathcal{#1}}}
\newcommand{\bb}[1]{{\mathbb{#1}}}

\newcounter{picture}


\DeclareMathOperator{\conv}{conv}
\DeclareMathOperator{\diam}{diam}
\DeclareMathOperator{\dist}{dist}
\DeclareMathOperator{\Z}{Z}

\newcommand{\Min}{{\text{\rm Min}}}
\newcommand{\aut}{{\text{\rm Aut}}}
\newcommand{\PGL}{{\text{\rm{PGL}}}}

\begin{document}

\begin{abstract} Let $\Gamma$ be a torsion free discrete group acting cocompactly on a two dimensional euclidean building $\Delta$.
The centralizer of an element of $\Gamma$ is either a Bieberbach group or is described by a finite graph of finite cyclic groups.
Explicit examples are computed, with $\Delta$ of type $\widetilde A_2$.
\end{abstract}

\maketitle

\section{Introduction}

Let $\Gamma$ be a torsion free discrete group which acts
cocompactly on a 2-dimensional euclidean building $\Delta$
\cite{br, ga, ron}. For example, $\Gamma$ may be any torsion free
lattice in $\PGL_3(\bb F)$, where $\bb F$ is a nonarchimedean
local field. In that case $\Gamma$ acts cocompactly on the
Bruhat-Tits building $\Delta$ of $\PGL_3(\bb F)$. The building
$\Delta$ is considered as a geodesic metric space \cite[Chapter
VI.3]{br} and is a union of apartments which are isometric to the
euclidean plane. Define the \textit{translation length} of an
element $g\in \Gamma$ to be $|g|= \inf_{x\in \Delta} d(g.x, x)$,
and let
$$\Min(g)=\{x\in \Delta : d(g.x, x) = |g|\}.$$
An element $g\in \Gamma$ is \textit{hyperbolic} if $|g|>0$ and
$\Min(g)$ is nonempty.  Since $\Gamma$ is torsion free, all nontrivial elements of $\Gamma$ are hyperbolic
 and $\Gamma$ acts freely on $\Delta$.
Each element $g\in \Gamma - \{1\}$ has at least one geodesic line $\ell$ in $\Delta$ (called an \textit{axis} for $g$) upon which $g$ acts by translation.
 The axes of $g$ are parallel to each other and their union is the convex set $\Min(g)$ \cite[Theorem II.6.8]{bh}.
 An axis $\ell$ for $g$ will always be \textit{oriented} in the direction of translation by $g$, which will be called the positive direction.
A nontrivial element $g\in \Gamma$ is said to be \textit{regular} if no axis of $g$ is parallel to a wall of $\Delta$;
 otherwise $g$ is said to be \textit{irregular}.
  If $g$ is regular then every axis of $g$ is contained in a unique apartment, as illustrated in Figure \ref{regular}.

 \refstepcounter{picture}
\begin{figure}[htbp]
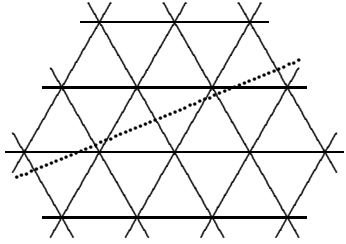
\label{regular}
\centerline{
\beginpicture
\setcoordinatesystem units  <0.5cm, 0.866cm>        
\setplotarea x from -3.5 to 5, y from -1.5 to 2.5   
\putrule from -2.5   2     to  2.5  2
\putrule from  -3.5 1  to 3.5 1
\putrule from -4.5 0  to 4.5 0
\putrule from -3.5 -1  to  3.5 -1
\setlinear
\plot  -4.3 -0.3  -1.7 2.3 /
\plot -3.3 -1.3  0.3 2.3 /
\plot -1.3 -1.3  2.3 2.3 /
\plot  0.7 -1.3  3.3 1.3 /
\plot  2.7 -1.3  4.3 0.3 /
\plot  -4.3 0.3  -2.7 -1.3 /
\plot  -3.3 1.3   -0.7 -1.3 /
\plot  -2.3 2.3   1.3 -1.3 /
\plot  -0.3 2.3   3.3 -1.3 /
\plot  1.7 2.3   4.3 -0.3 /
\setplotsymbol({$\cdot$}) \plotsymbolspacing=2pt
\plot  -4.2 -0.4   3.3 1.4 /
\endpicture
}
\caption{Axis of a regular element}
\end{figure}

The main result of this article concerns the structure of centralizers.

\begin{theorem}\label{main}
With the above hypotheses, let $g\in\Gamma - \{1\}$ and let $\Z_\Gamma(g)$ denote the centralizer of $g$ in $\Gamma$.
\begin{itemize}
\item[(a)]
 If $g$ has only one axis then $\Z_\Gamma(g) \cong \bb Z$.
\item[(b)]
  If $g$ has more than one axis then one of the following is true.
\begin{itemize}
\item[$\bullet$]
If $g$ is regular then $\Z_\Gamma(g)$ is a Bieberbach group in two dimensions.
\item[$\bullet$]
If $g$ is irregular then $\Z_\Gamma(g)/\langle g \rangle$ is the fundamental group of a finite graph of finite cyclic groups.
\end{itemize}
\end{itemize}
\end{theorem}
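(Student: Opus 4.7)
The key starting observation, which underlies all three cases, is that $\Z_\Gamma(g)$ preserves the convex set $\Min(g)$ (since $h$ commuting with $g$ sends axes of $g$ to axes of $g$), and that $\Min(g)$ is a closed convex subset of the CAT(0) space $\Delta$ spanned by parallel translation axes. The Flat Strip Theorem (Bridson--Haefliger, II.2.14) then produces a canonical isometric splitting $\Min(g)\cong Y\times\bb R$, with the $\bb R$-factor running in $g$'s direction of translation. This splitting is $\Z_\Gamma(g)$-invariant, and the $\bb R$-component must be a translation (not a reflection), since otherwise $hgh^{-1}$ would translate opposite to $g$. A standard CAT(0) argument (Bridson--Haefliger, II.6) shows that $\Z_\Gamma(g)$ acts freely, discretely, and cocompactly on $\Min(g)$. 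Part (a) follows immediately: if $Y$ is a point then $\Z_\Gamma(g)$ embeds as a discrete cocompact subgroup of $\bb R$, hence is infinite cyclic.

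For part (b), $Y$ has positive dimension. In the regular subcase no axis is parallel to a wall, and I would argue from the combinatorics of the $2$-dimensional building that each regular axis lies in a unique apartment $A$, that the flat strip bounded by any two parallel regular axes is contained in such an $A$, and hence that the $2$-dimensional convex set $\Min(g)$ coincides with a single apartment. The action of $\Z_\Gamma(g)$ on this Euclidean plane is then a free, discrete, cocompact action by Euclidean isometries, which makes $\Z_\Gamma(g)$ a $2$-dimensional Bieberbach group.

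In the irregular subcase at least one axis is parallel to a wall, and the local ``tree times line'' structure of $\Delta$ along such an axis identifies the transversal $Y$ with a simplicial tree. Since $\langle g\rangle$ acts trivially on $Y$ and cocompactly on the $\bb R$-factor, the quotient $\Z_\Gamma(g)/\langle g\rangle$ acts cocompactly on the tree $Y$. Vertex and edge stabilizers are finite cyclic, because the stabilizer in $\Z_\Gamma(g)$ of a fibre $\{y\}\times\bb R$ is a discrete subgroup of $\bb R$ that contains $\langle g\rangle$ with finite index. Bass--Serre theory then identifies $\Z_\Gamma(g)/\langle g\rangle$ with the fundamental group of a finite graph of finite cyclic groups.

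The main obstacle I anticipate is the geometric identification of $Y$ in case (b): establishing, in the regular case, that $\Min(g)$ fills out an entire apartment rather than a proper convex subset, and in the irregular case that the transversal $Y$ is a genuine simplicial tree supporting the claimed cocompact action with finite cyclic stabilizers. Both rely on combinatorial properties specific to $2$-dimensional Euclidean buildings, beyond the general CAT(0) framework, and this is where the $\widetilde A_2$ character of the problem enters.
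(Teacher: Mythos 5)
Your overall architecture coincides with the paper's: part (a) is immediate; the regular case is handled by showing $\Min(g)$ is a single apartment on which $\Z_\Gamma(g)$ acts freely, discretely and cocompactly, hence is a two--dimensional Bieberbach group; and the irregular case is handled by an action on a tree plus Bass--Serre theory. Indeed the paper's tree $\cl T_g$ (vertices the axial walls, edges the strips between adjacent ones) is precisely the simplicial structure on your transversal $Y$. The cocompactness of $\Z_\Gamma(g)$ on $\Min(g)$, which you quote as standard, is true but is proved in the paper by an explicit pigeonhole argument in the regular case; in the irregular case the paper instead uses \lemref{Z-orbit} (two axes in the same $\Gamma$-orbit are automatically in the same $\Z_\Gamma(g)$-orbit), which together with cocompactness of $\Gamma$ on $\Delta$ yields finiteness of the quotient graph. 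Either route is acceptable, but the fact should be proved, not merely cited.

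The genuine gap is the one you flag and do not close: the identification of $Y$ as a simplicial tree whose branch structure is governed by axial walls. The paper's key geometric input is \lemref{a}: if $g$ is irregular with more than one axis, then every axis of $g$ that is not a wall lies in the interior of a strip \emph{both of whose boundary walls are again axes of $g$}. The argument is short but not automatic: convexity of $\Min(g)$ forces the strip $\Sigma$ containing a non-wall axis to contain an axis $\ell'$ that is neither a wall nor the median line of $\Sigma$; the unique wall nearest to $\ell'$ must then be $g$-invariant (by uniqueness of the nearest wall at a given distance), hence is itself an axis, and so is the opposite boundary wall. Without this, you do not know that $\Min(g)$ is a union of full strips glued along axial walls --- a priori it could be a thin band about a median line, with no axial walls at all --- so the simplicial structure on $Y$, the uniform edge lengths, the local finiteness (each axial wall bounds at most $q+1$ strips), and the identification of vertex stabilizers with stabilizers of axial walls all rest on this lemma. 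Two smaller omissions: the action of $G=\Z_\Gamma(g)/\langle g\rangle$ on the tree may invert edges (an element can act as a glide reflection on a strip), so inverted edges must be subdivided before Bass--Serre theory applies, as in Remark~\ref{e*}; and in the regular case the step ``$\Min(g)$ is all of $\cl A$ rather than a proper convex subset'' requires noting that, having more than one axis, $g$ acts on $\cl A$ by translation rather than glide reflection, so that every line of $\cl A$ parallel to $\ell$ is an axis.
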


\noindent The principal aim of this article is to show how to compute centralizers explicitly.
 This is done in Section \ref{examples} below in the $\widetilde A_2$ case.
General results on the structure of centralizers in semihyperbolic groups are to be found in \cite[Section 7]{ab}, \cite[III.4]{bh}.

In the non-exotic case, where the building is associated to a
linear algebraic group of which $\Gamma$ is a subgroup, the current
results can be deduced and interpreted using matrices.  All locally
finite euclidean buildings of dimension $\geq 3$ are associated to linear
algebraic groups, and, up to a point, this justifies the restriction
to two-dimensional buildings in the present article.

\section{Background}\label{background}

Since $\Gamma\backslash\Delta$ is compact, $\Delta$ is uniformly locally finite, in the sense that there exists $q\ge 2$ such that
any edge of $\Delta$ lies on at most $q+1$ triangles. If $\Delta$ is of type $\widetilde A_2$ then every edge of $\Delta$ lies on exactly $q+1$ triangles.
This follows from the facts that the link of a vertex in a building of type $\widetilde A_2$
is a generalized 3-gon \cite[Chapter 3.2]{ron}, and all vertices in a thick
generalized 3-gon have the same valency \cite[Proposition (3.3)]{ron}.
The figures below relate to buildings of type $\widetilde A_2$,
in which apartments are euclidean planes tessellated by equilateral triangles.

The main result is based on the observation that if $g, h\in \aut(\Delta)$ commute, then $h$ leaves $\Min(g)$ invariant and $h$ maps each axis of $g$ to an axis of $g$ \cite[Theorem II.6.8]{bh}.

 \begin{lemma}\label{b}
  If an element $g\in \Gamma-\{1\}$ has only one axis then $\Z_\Gamma(g) \cong \bb Z$.
\end{lemma}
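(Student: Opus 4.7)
The plan is to use the observation recalled just before the lemma: any $h\in\aut(\Delta)$ commuting with $g$ preserves $\Min(g)$ and permutes the axes of $g$. Since by hypothesis $g$ has a unique axis $\ell$, every $h\in\Z_\Gamma(g)$ must satisfy $h(\ell)=\ell$, so $h$ restricts to an isometry of the geodesic line $\ell$. Such an isometry is either a translation or a reflection of $\ell$; a reflection would fix a point of $\ell$, which is impossible because $\Gamma$ is torsion free and therefore acts freely on $\Delta$. Consequently every nontrivial element of $\Z_\Gamma(g)$ acts on $\ell$ as a nonzero translation.

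Next I would define a map $\tau\colon \Z_\Gamma(g)\to \bb R$ by sending $h$ to its signed translation distance along the oriented axis $\ell$. Since the orientation-preserving isometries of a geodesic line form an abelian group isomorphic to $(\bb R,+)$, $\tau$ is a group homomorphism. Its kernel is trivial: if $\tau(h)=0$ then $h$ fixes $\ell$ pointwise, so $h=1$ by freeness. The element $g$ itself maps to a nonzero value, so the image $\tau(\Z_\Gamma(g))$ is a nontrivial subgroup of $\bb R$.

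It remains to show that $\tau(\Z_\Gamma(g))$ is discrete in $\bb R$, for then it is isomorphic to $\bb Z$ and we are done. Fix $x_0\in\ell$ and an orientation-preserving isometric identification $\ell\cong\bb R$ sending $x_0$ to $0$; under this identification the orbit $\Z_\Gamma(g).x_0\subset\ell$ corresponds exactly to $\tau(\Z_\Gamma(g))\subset\bb R$. Because $\Gamma$ acts freely and cocompactly on the locally finite complex $\Delta$, it is a discrete subgroup of $\aut(\Delta)$ and acts properly, so every $\Gamma$-orbit is a discrete subset of $\Delta$. Hence $\tau(\Z_\Gamma(g))$ is a nontrivial discrete subgroup of $\bb R$, which gives $\Z_\Gamma(g)\cong\bb Z$. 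The only step likely to require more than a sentence is this final discreteness assertion, and it is a routine consequence of the standing hypothesis that $\Gamma$ acts properly on a locally finite complex.
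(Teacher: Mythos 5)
Your proposal is correct and follows essentially the same route as the paper's (very terse) proof: the centralizer preserves the unique axis $\ell=\Min(g)$, acts on it freely by translations since torsion-freeness rules out reflections, and the resulting subgroup of $\bb R$ is discrete because $\Gamma$ acts properly, hence infinite cyclic. You have merely filled in the details (the homomorphism to $\bb R$ and the discreteness of the orbit) that the paper leaves implicit.
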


\begin{proof}
If $g$ has only one axis $\ell$ then $\Min(g)=\ell$ and so $\Z_\Gamma(g)$ acts freely by translation on $\ell$. Therefore $\Z_\Gamma(g)=\bb Z$.
\end{proof}

This proves Theorem \ref{main}(a). It is now only necessary to consider elements with more than one axis. The next result proves the first part of
Theorem \ref{main}(b).

 \begin{lemma}
  Suppose that $g\in \Gamma-\{1\}$ is a regular element. Then $\Z_\Gamma(g)$ is a Bieberbach group, isomorphic to either $\bb Z^2$
  or the fundamental group of a Klein bottle.
\end{lemma}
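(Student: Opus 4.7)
I would first identify $\Min(g)$ with a single apartment $A\cong\bb R^2$. Pick any axis $\ell$ of $g$; since $g$ is regular, $\ell$ lies in a unique apartment $A$, and because $g$ preserves $\ell$, the image $g(A)$ is an apartment containing $\ell$ and hence equals $A$. The restriction $g|_A$ is a euclidean isometry preserving $\ell$ setwise and translating along $\ell$ by $|g|$, so it must be either a pure translation or a glide reflection with axis $\ell$. A glide reflection has $\ell$ as its unique axis, contradicting the hypothesis that $g$ has more than one axis; hence $g|_A$ is a translation. Consequently every line of $A$ parallel to $\ell$ is an axis of $g$, so $A\subseteq\Min(g)$. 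Conversely, any axis $\ell'$ of $g$ is parallel to $\ell$ by \cite[Theorem II.6.8]{bh}, and the flat strip spanned by $\ell$ and $\ell'$ is a two-dimensional flat lying in some apartment $A'$; uniqueness of the apartment through $\ell$ forces $A'=A$, so $\ell'\subseteq A$. Therefore $\Min(g)=A$.

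Next, $\Z_\Gamma(g)$ preserves $\Min(g)=A$ by \cite[Theorem II.6.8]{bh}, so it acts on the euclidean plane $A$ by isometries. This action is faithful, free (since $\Gamma$ is torsion free and acts freely on $\Delta$) and properly discontinuous (since $\Delta$ is locally finite and $\Gamma$ discrete). The main obstacle, which I expect to be the hardest step, is verifying cocompactness of the induced action on $A$. One route is to invoke the general flat-torus-type results for groups acting geometrically on $\text{CAT}(0)$ spaces (see \cite[Chapter II.7]{bh} or \cite[\S 7]{ab}), which yield that $\Z_\Gamma(g)/\langle g\rangle$ acts cocompactly on the transverse line $Y$ perpendicular to $\ell$ in $A$, so $\Z_\Gamma(g)$ acts cocompactly on $A=\ell\times Y$. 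Alternatively, one can argue directly: cocompactness of $\Gamma$ on $\Delta$ forces only finitely many $\Gamma$-orbits of labelled local configurations to appear along any perpendicular ray in $A$, and a pigeonhole argument yields elements $h\in\Gamma$ carrying points far from $\ell$ back to a bounded neighbourhood of $\ell$ while matching local structure; such an $h$ preserves $A$ and restricts on $\ell$ to a translation commuting with $g|_\ell$, and hence lies in $\Z_\Gamma(g)$.

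Having shown that $\Z_\Gamma(g)$ acts freely, discretely and cocompactly on the euclidean plane $A$, it is by definition a two-dimensional Bieberbach group. The classical classification of torsion-free wallpaper groups leaves exactly two possibilities: the rank-two free abelian group $\bb Z^2$ and the fundamental group of the Klein bottle. Geometrically, the dichotomy reflects whether $\Z_\Gamma(g)$ consists entirely of translations of $A$ or also contains a glide reflection, whose axis must then be perpendicular to $\ell$ so as to commute with $g$.
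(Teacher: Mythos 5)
Your proof is correct and follows essentially the same route as the paper: identify $\Min(g)$ with the unique apartment containing an axis, rule out glide reflection using the existence of a second axis, and establish cocompactness of $\Z_\Gamma(g)$ on that apartment via a pigeonhole argument producing centralizing elements that move points far from $\ell$ back near a fundamental domain (the paper carries out in detail the ``direct'' argument you sketch, using coincidences among the conjugates $g_ngg_n^{-1}$). One small slip in your closing aside: a glide reflection commuting with the translation $g$ must have its axis \emph{parallel} to $\ell$, not perpendicular, since conjugating $g$ by a glide reflection whose axis is perpendicular to $\ell$ yields $g^{-1}$; this remark is not needed for the proof and does not affect its validity.
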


\begin{proof}
   Let $\ell$ be an axis for $g$.  Then $\ell$ is not parallel to any wall and lies in a unique apartment $\cl A$ of $\Delta$,
   which is $g$-invariant, since $\ell$ is.
   Any other axis $\ell^\prime$ for $g$  is parallel to $\ell$ and therefore lies in a common apartment with $\ell$.
   However $\ell^\prime$ also lies in a unique apartment, namely $\cl A$.
   Thus $\Min(g)\subseteq\cl A$. Now since $g$ has more than one axis,
   $g$ acts by translation (rather than glide reflection) on $\cl A$,
   and any geodesic in $\cl A$ which is parallel to $\ell$ is an axis for $g$.
   Thus $\Min(g)= \cl A$ and $\Z_\Gamma(g)$ leaves $\cl A$ invariant.

   We claim that $\Z_\Gamma(g)$ acts cocompactly on $\cl A$.
   Choose a  point $\xi$ on $\ell$ and a geodesic $p \not=\ell$ in $\cl A$ passing through $\xi$.
   Let $D\subset\Delta$ be a compact fundamental domain for $\Gamma$ containing $\xi$, and choose $\xi_n\in p,\, n=1,2,\dots$ such
   that $\dist(\xi_n, \xi) \ge 2n.\diam(D)$. Let $\ell_n$ be the geodesic in $\cl A$ passing through $\xi_n$ and parallel to $\ell$.
   Then $\ell_n$ is an axis for $g$, since $g$ acts by translation on $\cl A$ in the direction of $\ell$.
    Choose $g_n\in \Gamma$ such that $g_n\xi_n\in D$. The geodesic $g_n\ell_n$ passes through $D$
   and is an axis for $g_ngg_n^{-1}$. The displacement of $g_n\xi_n$ by $g_ngg_n^{-1}$ is equal to the displacement of $\xi$ by $g$
   and each $g_n\xi_n$ lies in $D$. Since $\Gamma$ is discrete, there are infinitely many pairs $m\not= n$ such that
   $g_mgg_m^{-1} = g_ngg_n^{-1}$. Choose such a pair. Then $h=g_m^{-1}g_n\in \Z_\Gamma(g)$ and so $h$ leaves $\cl A$ invariant.
   By considering $h^2$ if necessary, we may assume that $h$ is a translation rather than a glide reflection of $\cl A$
   and $h$ moves $\xi$ away from $D$. Thus the subgroup of $\Z_\Gamma(g)$ generated by
   $g$ and $h$ acts cocompactly on $\cl A$.
   It follows that $\Z_\Gamma(g)$ is a Bieberbach group in two dimensions.
\end{proof}

\begin{definition}
  A \textit{strip} in $\Delta$ is the convex hull of two parallel walls of $\Delta$ which contains no other wall of $\Delta$
  (and so is of minimal width). Recall that a wall in $\Delta$ is a geodesic in the 1-skeleton of an apartment which is fixed
  pointwise by a reflection in the associated euclidean Coxeter group. If $g\in \Gamma - \{1\}$ then an \textit{axial wall}
  for $g$ is a wall of $\Delta$ which is also an axis for $g$.
\end{definition}

\begin{lemma}\label{a}
  Let $g\in \Gamma-\{1\}$ be an irregular element, with more than one axis.
Then any axis $\ell$ for $g$ is either an axial wall or lies in the interior of a unique strip bounded by
two axial walls.
\end{lemma}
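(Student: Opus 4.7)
The plan is to exhibit axial walls on each side of $\ell$ and then deduce uniqueness directly from the definition of a strip. Assume $\ell$ is not itself a wall, since otherwise the first alternative of the statement holds.

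Using that $g$ has more than one axis, pick a second axis $\ell^\prime\ne\ell$. By the Flat Strip Theorem \cite[Theorem~II.2.13]{bh}, the convex hull $F=\conv(\ell\cup\ell^\prime)$ is a Euclidean flat strip contained in $\Min(g)$, and being a $2$-dimensional flat in the $2$-dimensional building $\Delta$ it lies inside a common apartment $\cl A$, so that $\ell,\ell^\prime\subset\cl A$. Since $g$ preserves two distinct parallel axes inside $\cl A$, it cannot act there as a glide reflection, and therefore acts on $\cl A$ as a translation in the direction of $\ell$; consequently every line of $\cl A$ parallel to $\ell$ is an axis of $g$. Because $g$ is irregular, $\cl A$ carries a non-empty discrete family of walls parallel to $\ell$; let $w_1,w_2$ be the two such walls closest to $\ell$, one on each side. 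They are axes of $g$ by the previous sentence, hence axial walls, and since $\ell$ is not a wall it lies strictly between them, i.e.\ in the interior of $S=\conv(w_1\cup w_2)$.

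To see that $S$ is a strip in the defined sense, note that any wall of $\Delta$ wholly contained in $S$ must be parallel to $w_1,w_2$, since a wall meeting $S$ transversely cannot be wholly contained in the bounded-width region $S$; the minimality of $w_1,w_2$ then prevents any such wall from lying strictly between them. For uniqueness, suppose $S^\prime=\conv(w_1^\prime\cup w_2^\prime)$ is another strip bounded by axial walls with $\ell$ in its interior. If, on the same side of $\ell$, one of $w_1,w_2$ (say $w_1$) were strictly closer to $\ell$ than $w_1^\prime$, then $w_1$ would be a wall of $\Delta$ strictly inside $S^\prime$, contradicting the definition of $S^\prime$. Hence $\{w_1,w_2\}=\{w_1^\prime,w_2^\prime\}$ and $S=S^\prime$.

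The main obstacle is the assertion that $g$ acts on the whole apartment $\cl A$ as a translation: this requires $g$ to preserve $\cl A$, which is not automatic because the apartment carrying a flat strip need not be unique in a $2$-dimensional building. A natural route around this is to work locally at $\ell$. Since $\ell$ is not a wall and $g$ is irregular, the walls of $\Delta$ parallel to $\ell$ at minimal distance from $\ell$ on each side form a finite $g$-invariant set on which $g$ preserves the side of $\ell$ (being a translation along $\ell$); the closest such wall on each side is therefore $g$-invariant and, by torsion-freeness, an axis of $g$. With this local refinement in place the rest of the argument reduces to elementary plane Euclidean geometry inside $\cl A$.
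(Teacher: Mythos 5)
Your overall strategy --- locate the nearest walls on either side of $\ell$, show they are $g$-invariant by a uniqueness argument, and deduce they are axial --- is the same as the paper's. But both versions of your argument have the same hole, and it is exactly the point where the hypothesis ``more than one axis'' must be used. In the main line you need $g$ to preserve the apartment $\cl A$; as you yourself observe, this is not available, since for an irregular element the apartment containing a flat strip is far from unique and $g$ need not stabilize any apartment. The trouble is that your proposed local repair does not close the gap: the parenthetical ``$g$ preserves the side of $\ell$ (being a translation along $\ell$)'' conflates acting on the geodesic $\ell$ by translation with acting on a neighbourhood of $\ell$ by translation. Let $\Sigma$ be the unique strip containing $\ell$; then $g$ preserves $\Sigma$ and translates $\ell$, but if $\ell$ is the \emph{median line} of $\Sigma$ the isometry $g|_\Sigma$ could be a glide reflection, which translates $\ell$ while \emph{swapping} the two sides of $\ell$ and hence swapping the two nearest walls. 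In that case neither nearest wall is $g$-invariant and your finite-invariant-set argument only shows the pair is permuted. This scenario is not vacuous: the paper's remark immediately after the lemma notes that an irregular element with a single axis can have that axis be a median line, which is precisely the glide-reflection case. Since your repair never invokes the existence of a second axis, it cannot be correct as stated --- the conclusion is false without that hypothesis.

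The fix, which is what the paper does, is to use the second axis to manufacture an auxiliary axis $\ell'\subset\Sigma$ that is neither a wall nor the median line: because $\Min(g)$ is convex and contains more than one axis, axes accumulate on $\ell$ from at least one side inside $\Sigma$, and all but at most one of them avoid the median line. For such an $\ell'$ the nearest parallel wall $\ell_0$ is \emph{unique}, so $g\ell_0=\ell_0$ by the isometry/uniqueness argument you already have; an isometry of $\Sigma\cong\bb R\times[0,w]$ fixing one boundary component setwise and translating it must be a translation of $\Sigma$, so the opposite boundary wall is also $g$-invariant and both are translated by $|g|$, i.e.\ both are axial. (Your last step ``$g$-invariant and, by torsion-freeness, an axis'' also silently needs this translation statement: freeness only gives that $g$ translates the wall by \emph{some} positive amount, and one must check that amount equals $|g|$.) Your uniqueness-of-the-strip paragraph is fine and can be kept as is.
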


\begin{proof}
Suppose that $\ell$ is an axis  for $g$ which is not a wall of $\Delta$.
Let $\Sigma$ be the unique strip containing $\ell$.
Since $g$ has more than one axis and $\Min(g)$ is convex,
$\Sigma$ must contain an axis $\ell^\prime$ for $g$ which is neither a wall nor the median line of $\Sigma$.
Let $\ell_0$ be the unique wall parallel to $\ell^\prime$ at minimal distance $\delta$ to $\ell^\prime$.
Since $g$ is an isometry, it must map $\ell_0$ to a parallel wall $g.\ell_0$, also at distance $\delta$ to $\ell^\prime$.
By uniqueness, $\ell_0=g.\ell_0$, and $\ell_0$ is an axis of $g$.
Finally, the edge $\ell_1$ of $\Sigma$ opposite to $\ell_0$ is also an axis of $g$.
\end{proof}

  The preceding argument shows that, if an irregular element $g\in\Gamma$ has only one axis $\ell$, then $\ell$ is
  either a median line or a wall.

\begin{definition}
\label{tree}
Suppose that the element $g\in \Gamma -\{1\}$ is irregular with more than one axis. Define a graph $\cl T_g$ as follows. A vertex $\ell$ of $\cl T_g$ is an axial wall of $g$.
There is an edge $[\ell_1, \ell_2]$ between two such vertices if $\ell_1$, $\ell_2$ are the boundary
walls of a common strip.
\end{definition}

\begin{lemma}\label{d}
  Under the above hypotheses, the graph $\cl T_g$ is a tree.
\end{lemma}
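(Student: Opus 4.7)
The plan is to exploit the product decomposition of $\Min(g)$ and identify $\cl T_g$ with a graph canonically associated to a real tree. Since $g$ acts by hyperbolic isometry on the CAT(0) space $\Delta$, \cite[Theorem II.6.8]{bh} gives an isometric splitting $\Min(g)=Y\times\bb R$, under which $g$ acts trivially on $Y$ and by translation of length $|g|$ on $\bb R$, and the axes of $g$ are precisely the fibres $\{y\}\times\bb R$ for $y\in Y$. Because $\Delta$ is two-dimensional and $g$ has more than one axis, $Y$ is a nontrivial one-dimensional CAT(0) space, hence a real tree. Let $W\subseteq Y$ be the set of $y$ for which $\{y\}\times\bb R$ is a wall of $\Delta$. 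Any wall of $\Delta$ that is contained in the flat strip between two parallel axial walls must, being an infinite geodesic in the strip, be parallel to them; so a strip in the sense of the definition preceding Lemma~\ref{a} corresponds under the product decomposition to a segment $[y_1,y_2]\subseteq Y$ whose interior contains no point of $W$. Thus $\cl T_g$ is the abstract graph on the vertex set $W$ whose edges join pairs of points whose $Y$-geodesic meets $W$ only at its endpoints.

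The key geometric claim is that every branch point of the tree $Y$ lies in $W$. A branch point $y\in Y$ corresponds to at least three closed half-planes of $\Min(g)$ sharing the common boundary line $\{y\}\times\bb R$. In a two-dimensional Euclidean building, three distinct half-planes with a common boundary line can occur only when that line is contained in the 1-skeleton, because at an interior point of a chamber the local picture of $\Delta$ is planar and admits only two half-plane directions transverse to a given line. Hence $\{y\}\times\bb R$ is a wall of $\Delta$ and $y\in W$.

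Granted the claim, $\cl T_g$ is a tree. For connectedness, given $y_1,y_2\in W$, the unique $Y$-geodesic from $y_1$ to $y_2$ meets the locally finite set $W$ in a discrete sequence $y_1=z_0,z_1,\ldots,z_k=y_2$, and consecutive pairs are edges of $\cl T_g$. For acyclicity, given a simple cycle $w_0,w_1,\ldots,w_n=w_0$ with $n\ge 3$, consider for each $i$ the median of $\{w_{i-1},w_i,w_{i+1}\}$ in $Y$; if it were $w_{i-1}$, $w_{i+1}$, or a fourth point, then either one of $w_{i\pm 1}$ or a branch point of $Y$ (hence a point of $W$, by the claim) would lie in the interior of an edge of $\cl T_g$, which is impossible. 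Hence the median is $w_i$, so $w_i$ lies on the $Y$-geodesic $[w_{i-1},w_{i+1}]$; iterating, the points $w_0,\ldots,w_{n-1}$ lie in this order on a single geodesic of $Y$, and then the closing edge $[w_{n-1},w_0]$ of $\cl T_g$ contains all the intermediate $w_i$ in its interior, contradicting the edge condition. The principal obstacle is the geometric claim that branch points of $Y$ are axial walls; this is where the two-dimensionality of $\Delta$ enters essentially, and its verification requires inspecting the local structure of $\Delta$ transverse to a prospective non-wall axis.
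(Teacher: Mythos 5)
Your argument is correct, but it is organized quite differently from the paper's. The paper's proof is very short: given axial walls $\ell,\ell'$, the convex hull $\conv(\ell,\ell')$ lies in an apartment and is a union of $n$ contiguous strips whose intermediate boundary walls are again axial walls (by convexity of $\Min(g)$), which produces a path from $\ell$ to $\ell'$ in $\cl T_g$; uniqueness of the path is then attributed to the two-dimensionality and contractibility of $\Delta$. You instead push the product decomposition $\Min(g)=Y\times\bb R$ of \cite[Theorem II.6.8]{bh} further, realize $\cl T_g$ as the graph on the set $W$ of wall-points of the cross-section $Y$, and reduce everything to the real-tree geometry of $Y$ via the key observation that every branch point of $Y$ is itself an axial wall --- proved by the right local argument (three flat strips sharing a boundary line force that line into the $1$-skeleton at every point, and a geodesic line in the $1$-skeleton is a wall; note the branches at a branch point are germs of flat strips rather than half-planes, which is all the local argument needs). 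Your connectedness argument is essentially the paper's existence argument in different clothing, while your median/acyclicity argument is a genuinely more careful substitute for the paper's brief appeal to contractibility. The one step you assert without justification is that the one-dimensional CAT(0) space $Y$ is a real tree; this is true but is not part of \cite[Theorem II.6.8]{bh} --- it follows from Kleiner's theorem identifying geometric and topological dimension for CAT(0) spaces (zero-dimensional spaces of directions force every triangle to be a tripod), or in the present setting it can be extracted from the same local analysis you already use for branch points. With that point referenced or proved, your argument is complete, and it has the merit of making the uniqueness half of the paper's proof fully rigorous.
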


\begin{proof}
  If $\ell$, $\ell'$ are axial walls of $g$, then their convex hull $\conv(\ell,\ell')$  is a convex subset of an apartment of $\Delta$,
   which is the union of $n$ contiguous parallel strips. This defines a path of length $n$ in $\cl T_g$ from $\ell$ to $\ell'$.
    The path is unique, since $\Delta$ is two dimensional and contractible \cite [Appendix 4, page 185]{ron}.
\end{proof}

\refstepcounter{picture}
\begin{figure}[htbp]
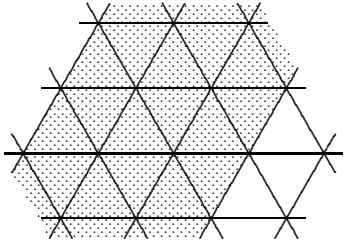

\centerline{
\beginpicture
\setcoordinatesystem units  <0.5cm, 0.866cm>        
\setplotarea x from -3.5 to 5, y from -1.5 to 2   
\putrule from -2.5   2     to  2.5  2
\putrule from  -3.5 1  to 3.5 1
\putrule from -4.5 0  to 4.5 0
\putrule from -3.5 -1  to  3.5 -1
\setlinear
\plot  -4.3 -0.3  -1.7 2.3 /
\plot -3.3 -1.3  0.3 2.3 /
\plot -1.3 -1.3  2.3 2.3 /
\plot  0.7 -1.3  3.3 1.3 /
\plot  2.7 -1.3  4.3 0.3 /
\plot  -4.3 0.3  -2.7 -1.3 /
\plot  -3.3 1.3   -0.7 -1.3 /
\plot  -2.3 2.3   1.3 -1.3 /
\plot  -0.3 2.3   3.3 -1.3 /
\plot  1.7 2.3   4.3 -0.3 /
\setshadegrid span <1.5pt>
\vshade  -4.3  -0.3  -0.3  <,,,z>  -3.3  -1.3  0.7  <z,,,>  -1.7   -1.3  2.3  <z,,,>  0.7  -1.3 2.3 <z,,,>  2.3  0.3 2.3 <z,,,>  3.3   1.3 1.3 /
\endpicture
}
\caption{Part of $\Min(g)$ corresponding to path of length 3 in $\cl T_g$}
\end{figure}

  If $\ell$ is any wall of $\Delta$ then the class of all walls parallel to $\ell$ is a wall $\ell^\infty$ of the
  spherical building at infinity $\Delta^\infty$. There is an associated tree $T(\ell^\infty)$,
  whose vertices are the elements $\ell^\infty$ \cite[Chapter 10.2]{ron}, \cite[Chapter 10]{weiss}. If $\ell$ is an axis of the element $g\in \Gamma$ then the tree $\cl T_g$ is a subtree of $T(\ell^\infty)$.

\begin{lemma}\label{e}
  The group $G=\Z_\Gamma(g)/\langle g \rangle$ acts on the tree $\cl T_g$.
\end{lemma}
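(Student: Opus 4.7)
The plan is to show that every element $h\in \Z_\Gamma(g)$ induces a simplicial automorphism of $\cl T_g$, and then to observe that $g$ itself acts trivially, so that the action descends to $G = \Z_\Gamma(g)/\langle g\rangle$.

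First I would check that $h$ permutes the vertex set of $\cl T_g$. By the observation recorded at the start of Section \ref{background} (which is \cite[Theorem II.6.8]{bh}), since $h$ commutes with $g$, $h$ maps $\Min(g)$ to itself and maps each axis of $g$ to an axis of $g$. On the other hand $h\in \Gamma\subseteq\aut(\Delta)$ is a simplicial automorphism, so $h$ maps walls of $\Delta$ to walls of $\Delta$. Combining these two facts, $h$ maps axial walls of $g$ to axial walls of $g$; that is, $h$ permutes the vertices of $\cl T_g$.

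Next I would check that $h$ preserves adjacency. A strip is by definition the convex hull of two parallel walls containing no other wall. Simplicial isometries send walls to walls, parallel walls to parallel walls, and convex hulls to convex hulls, so $h$ sends strips to strips, with $h$ sending the two boundary walls of $\Sigma$ to the two boundary walls of $h\Sigma$. Hence if $[\ell_1,\ell_2]$ is an edge of $\cl T_g$, so is $[h\ell_1, h\ell_2]$. This yields a homomorphism $\Z_\Gamma(g) \to \aut(\cl T_g)$.

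Finally, $g$ acts by translation along each of its own axes, so $g$ fixes every axial wall (as a set), i.e.\ $g$ fixes every vertex of $\cl T_g$. An automorphism of a tree that fixes every vertex is trivial, so $g$ lies in the kernel of the homomorphism above, and therefore the action factors through $G=\Z_\Gamma(g)/\langle g\rangle$. There is no substantial obstacle here; the only point requiring any care is the compatibility between the two structures used to define an axial wall (being a wall of $\Delta$, and being an axis of $g$), and this compatibility is immediate from the fact that $h$ is simultaneously a simplicial automorphism of $\Delta$ and an element of the centralizer of $g$.
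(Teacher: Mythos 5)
Your proposal is correct and follows essentially the same route as the paper: elements of $\Z_\Gamma(g)$ permute the axes of $g$ (hence the axial walls and the strips between them), and $\langle g\rangle$ fixes every vertex since each axis is $g$-invariant, so the action descends to $G$. The paper's proof is just a terser version of the same argument; your extra verification that adjacency is preserved is a reasonable elaboration of what the paper leaves implicit.
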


\begin{proof}
Since each element of $\Z_\Gamma(g)$ maps axes of $g$ to axes of $g$, the group $\Z_\Gamma(g)$ acts on $\cl T_g$.
Also the cyclic group $\langle g \rangle$ fixes each vertex of $\cl T_g$, since each axis is $g$-invariant.
\end{proof}

\begin{remark}\label{e*}
The action of $G$ on $\cl T_g$ may have inversion.
In order to obtain an action without inversion, each geometric edge which is stabilized by a nontrivial element of $G$ is divided into two
by inserting a vertex $\ell^*$ at its midpoint. Then $G$ acts without inversion of the resulting tree $\cl T_g^*$.
The full barycentric subdivision of $\cl T_g$ would be a good alternative to $\cl T_g^*$, but we have chosen to minimize the number of additional
vertices in order to simplify the graphs for the explicit examples below.
\end{remark}

\noindent
\begin{minipage}{8.5cm}

\begin{remark}\label{valency} Any strip containing an axial wall $\ell$ is completely determined by $\ell$ and a single triangle in the strip which contains
 a fixed edge of $\ell$. Since there are at most $q+1$ such triangles, each vertex of the tree $\cl T_g$ has valency at most $q+1$.
\end{remark}

\begin{remark}\label{start1}
Recall that an axis $\ell$ for $g$ is oriented in the direction of translation by $g$.
  Let $\ell, \ell^\prime$ be neighbouring axial walls for $g$ and fix an edge $\varepsilon = [v_0, w_0]$ in $\Delta$ joining them.
  Then $g$ translates $v_0 ,w_0$ the same distance along the parallel walls $\ell, \ell^\prime$.
  The edge $g\varepsilon = [gv_0, gw_0]$, is the edge obtained by translating $\varepsilon$ along the strip.
  This observation will be crucial in constructing the explicit examples in Section \ref{examples}.
\end{remark}
\end{minipage}
\hspace{1cm}
\begin{minipage}{3cm}
\beginpicture
\setcoordinatesystem units  <1.4cm, .8cm>  
\setplotarea  x from -1.2 to 0.1,  y from -1 to 1        
\put {$\varepsilon$} at  -0.4 -0.8
\arrow <8pt> [.2, .67] from    -0.5 -0.5 to -0.7 -0.7
\put {$g\varepsilon$}  at  -0.5 3.9
\arrow <8pt> [.2, .67] from    -0.5 3.5 to  -0.7 3.3
\setlinear \plot -1 -1  0 0  -1 1  0 2  -1 3  0 4 / 
\putrule from  -1 -1 to  -1 5
\putrule from  0 -1 to  0 5
\put {$_\bullet$} at -1 -1
\put {$v_0$} at -1.2 -1
\put {$_\bullet$} at -1  3
\put {$gv_0$}      at -1.3  3
\put {$\ell$}  at  -1.2 5
\put {$\ell^\prime$}  at  0.2 5
\put {$_\bullet$} at 0 0
\put {$w_0$} at 0.2 0
\put {$_\bullet$} at 0  4
\put {$gw_0$}      at 0.3  4
\setshadegrid span <1.5pt>
\vshade  -1  -1  3  <,z,,>   0   0  4  /
\endpicture
\end{minipage}

\bigskip

\noindent Recall that our axes are always \emph{oriented}.
\begin{lemma}\label{Z-orbit}
  Two axes $\ell, \ell^\prime$ lie in the same $\Z_\Gamma(g)$-orbit if and only if they lie in the same $\Gamma$-orbit.
\end{lemma}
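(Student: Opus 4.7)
The plan is to prove the nontrivial direction: assuming $h\in\Gamma$ satisfies $h\ell=\ell'$ (as oriented geodesics), I want to conclude that $h$ already lies in $\Z_\Gamma(g)$. The reverse implication is immediate from $\Z_\Gamma(g)\subseteq\Gamma$.

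First I would observe that since $\ell$ is an oriented axis of $g$, its translate $h\ell=\ell'$ is an oriented axis of the conjugate $hgh^{-1}$, the orientation on $\ell'$ being the image under $h$ of the positive orientation of $\ell$. By hypothesis this coincides with the given positive orientation of $\ell'$. Therefore $hgh^{-1}$ translates $\ell'$ by distance $|g|$ in the positive direction. But $g$, by assumption that $\ell'$ is an oriented axis of $g$, acts on $\ell'$ as the very same translation. Hence $g$ and $hgh^{-1}$ agree pointwise on $\ell'$.

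The next step is to form $k=g^{-1}hgh^{-1}$. By the previous step $k$ fixes every point of $\ell'$, in particular any vertex of $\Delta$ lying on $\ell'$. Since $\Gamma$ is torsion free and hence acts freely on $\Delta$ (as noted at the start of the introduction), $k$ must be the identity, so $hgh^{-1}=g$ and $h\in\Z_\Gamma(g)$, as required.

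The only delicate point, and the one thing to be careful about, is keeping track of orientations: the argument depends on the standing convention, established in the introduction, that an axis is \emph{oriented} in the direction of translation by $g$, so that ``same $\Gamma$-orbit'' is understood in the category of oriented geodesic lines. Without this convention the statement would fail, since an orientation-reversing element mapping $\ell$ to $\ell'$ would conjugate $g$ to $g^{-1}$ rather than to $g$; once the convention is in force, the two translations automatically have matching displacement vectors and the freeness of the action closes the argument.
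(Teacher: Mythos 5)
Your proof is correct and follows essentially the same route as the paper: both arguments use the standing orientation convention to see that $g$ and $hgh^{-1}$ effect the identical translation of $\ell'$, and then invoke freeness of the $\Gamma$-action to conclude $hgh^{-1}=g$. (The paper phrases the key step as $\gamma[v,gv]=[\gamma v, g\gamma v]$ for $v\in\ell$, which is the same observation; your aside about vertices on $\ell'$ is unnecessary since freeness holds at every point of $\Delta$, but it does no harm.)
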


\begin{proof}
  Recall that $g$ acts by translation by the same distance on each of the axes.
  In order to prove the nontrivial implication, let $\gamma\in \Gamma$ with $\gamma\ell=\ell^\prime$.
  We claim that $\gamma\in\Z_\Gamma(g)$. If $v\in \ell$ then $\gamma v\in \ell^\prime$ and
  $\gamma[v,gv]=[\gamma v, g\gamma v]$. Therefore $\gamma(gv)=g(\gamma v)$. Since $\Gamma$ acts freely on $\Delta$,
  $\gamma g =g\gamma$.
\end{proof}

\begin{lemma}\label{vertex orbit}
  Let $g\in \Gamma-\{1\}$. Then the set of $\Z_\Gamma(g)$-orbits of axial walls for $g$ is finite.
\end{lemma}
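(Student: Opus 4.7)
The plan is to reduce the lemma to the finiteness of $\Gamma$-orbits on vertex pairs at distance $|g|$. Lemma \ref{Z-orbit} already identifies $\Z_\Gamma(g)$-orbits with $\Gamma$-orbits on the set of axial walls of $g$, so it suffices to show the set of $\Gamma$-orbits of axial walls is finite. To each axial wall $\ell$ I would attach a pair $(v_\ell, g v_\ell)$, where $v_\ell$ is any vertex of $\ell$; such a vertex exists because walls in a two-dimensional Euclidean building lie in the 1-skeleton. Since $g$ acts on $\ell$ by translation of length $|g|$, the point $g v_\ell$ is again a vertex of $\ell$ and $d(v_\ell, g v_\ell) = |g|$.

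The heart of the argument is that the resulting map $\ell \mapsto \Gamma\cdot(v_\ell, g v_\ell)$ separates distinct $\Gamma$-orbits of axial walls. Suppose $\gamma(v_{\ell_1}, g v_{\ell_1}) = (v_{\ell_2}, g v_{\ell_2})$ for some $\gamma\in\Gamma$. Then $\gamma g v_{\ell_1} = g\gamma v_{\ell_1}$, so $g^{-1}\gamma^{-1}g\gamma$ fixes $v_{\ell_1}$, and since $\Gamma$ acts freely this forces $\gamma\in\Z_\Gamma(g)$. Moreover $\gamma\ell_1$ is a wall of $\Delta$ (as $\gamma$ is a building automorphism) containing the geodesic segment $[v_{\ell_2}, g v_{\ell_2}]$, and a wall of $\Delta$ is determined by any nondegenerate subsegment, so $\gamma\ell_1 = \ell_2$. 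Hence $\ell_1$ and $\ell_2$ lie in the same $\Gamma$-orbit, and the injectivity on orbits is established.

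To finish I need only bound the number of $\Gamma$-orbits of ordered vertex pairs at distance $|g|$. By cocompactness, $\Gamma$ has only finitely many orbits on vertices; by the uniform local finiteness of $\Delta$ recalled in Section \ref{background}, the closed ball of radius $|g|$ about any vertex contains only finitely many vertices. Combining these gives the required finiteness. The only point requiring care is the uniqueness of a wall through a given nondegenerate segment, which I expect to be the main (and only minor) obstacle: it follows because walls in $\Delta$ are geodesic lines in the 1-skeleton, and two such lines agreeing on a segment of positive length must coincide.
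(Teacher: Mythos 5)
Your overall strategy matches the paper's: an axial wall is captured by an ordered vertex pair $(v,gv)$ at distance $|g|$, cocompactness plus uniform local finiteness give finitely many $\Gamma$-orbits of such pairs, and the freeness argument (the same one as in the proof of Lemma \ref{Z-orbit}) upgrades a $\Gamma$-element matching two such pairs to an element of $\Z_\Gamma(g)$. However, the one step you single out as "the only point requiring care" is justified by a false claim: in a thick two-dimensional building a wall is \emph{not} determined by a nondegenerate subsegment, and two geodesic lines in the $1$-skeleton can agree on a segment (indeed on an entire ray) and then diverge. Walls branch at vertices --- this is exactly what makes the tree $T(\ell^\infty)$ of a parallel class, and the tree $\cl T_g$ of Definition \ref{tree}, have nontrivial valency (cf.\ Remark \ref{valency}, where a single axial wall can bound up to $q+1$ strips). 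So the inference "$\gamma\ell_1$ and $\ell_2$ are both walls containing $[v_{\ell_2},gv_{\ell_2}]$, hence $\gamma\ell_1=\ell_2$" is not valid as stated.

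The conclusion $\gamma\ell_1=\ell_2$ is nonetheless true, and you have already produced the ingredient needed to prove it: at that point you know $\gamma\in\Z_\Gamma(g)$. Hence $\gamma\ell_1$ is again an axis of $g$, and it passes through $v_{\ell_2}=\gamma v_{\ell_1}$. By the product decomposition of $\Min(g)$ in \cite[Theorem II.6.8]{bh} there is exactly one axis of $g$ through any given point of $\Min(g)$ (equivalently, $\ell_2=\bigcup_{n\in\bb Z}g^n[v_{\ell_2},gv_{\ell_2}]$ is itself a geodesic line contained in the $g$-invariant line $\gamma\ell_1$, forcing equality). This is also the sense in which the paper's own proof asserts that "the axis is determined by a segment of length $|g|$": determined by the segment \emph{together with the $g$-action}, not by unique geodesic extension. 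With that substitution your argument is correct and is essentially the paper's proof.
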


\begin{proof}
Since $g$ acts on each axis by translation of distance $|g|$, the
axis is determined by a segment of length $|g|$. Since $\Gamma$
acts cocompactly, every segment of length~$|g|$ can be translated
by some element $h\in \Gamma$ to some segment belonging to a
certain finite set.  Each segment in the finite set generates an
axial wall for some conjugate of~$g$.  So, further translating by
one of a finite number of elements $k\in \Gamma$, we obtain one of
a finite number of segments generating axial walls for~$g$ itself.
According to Lemma \ref{Z-orbit}, the elements $hk$ lie
in~$Z_\Gamma(g)$.
\end{proof}

\begin{lemma}\label{vertex stabilizer}
  Let $g\in \Gamma-\{1\}$ and let $\ell$ be an axis for $g$.
  Then the stabilizer $S(\ell)$ of $\ell$ in $\Z_\Gamma(g)$ is a cyclic group.
  If $S(\ell)$ is nontrivial then it is generated by an element of minimal positive translation length.
\end{lemma}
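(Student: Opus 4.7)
The plan is to reduce the problem to the classification of discrete subgroups of the isometry group of the real line $\ell$, and then use the hypotheses (torsion‐freeness/free action, orientation of axes) to exclude reflections.

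First, I would observe that every $h\in S(\ell)$ restricts to an isometry of $\ell$. Since the isometry group of the oriented geodesic $\ell$ (identified with $\mathbb R$) is $\mathbb Z/2 \ltimes \mathbb R$, the element $h$ acts either as a translation or as a reflection. If $h$ were a reflection, it would fix a point of $\ell\subset\Delta$; but $\Gamma$ acts freely on $\Delta$, so this forces $h=1$. Hence every nontrivial $h\in S(\ell)$ acts on $\ell$ as a nontrivial translation, and in particular preserves the chosen orientation. (Equivalently, one could argue directly that since $h$ commutes with $g$ and $g$ translates $\ell$ in the positive direction, $h$ must preserve the orientation of $\ell$.)

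Next, I would package this as a group homomorphism $\tau\colon S(\ell)\to \mathbb R$ sending $h$ to its translation length along $\ell$. The previous paragraph shows $\tau$ is injective. The image is a subgroup of $\mathbb R$, and it is discrete: picking any basepoint $x\in\ell$, the orbit $\{h.x : h\in S(\ell)\}\subseteq\Gamma.x$ is discrete in $\Delta$ because $\Gamma$ acts properly discontinuously, and $\tau(h)$ is just the signed distance from $x$ to $h.x$ along $\ell$. A discrete subgroup of $\mathbb R$ is either trivial or infinite cyclic, generated by the element of smallest positive absolute value. Pulling this back through $\tau$ gives the statement: $S(\ell)$ is cyclic, and if nontrivial it is generated by an element of minimal positive translation length.

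The only real obstacle is the orientation/free‐action step, which is genuinely needed to rule out a $\mathbb Z/2$ factor; everything else is standard. It may be worth remarking explicitly that this argument applies both when $\ell$ is an axial wall and when it is any other axis, so the lemma holds uniformly for all $g\in\Gamma-\{1\}$.
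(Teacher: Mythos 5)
Your proposal is correct and follows essentially the same route as the paper: identify $S(\ell)$ with a subgroup of $\mathbb R$ via the signed translation distance along the oriented axis, use freeness of the action to get injectivity (and to rule out reflections, a point the paper leaves implicit), and use discreteness — the paper cites discreteness of the set of translation lengths where you use proper discontinuity of the orbit, but this is an immaterial variation. No gaps.
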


\begin{proof}
Each element $z$ of $S(\ell)$ acts upon the oriented axis $\ell$ by a translation of signed distance
$\rho(z) = \pm |z|$.
The map $z\mapsto \rho(z)$ is a homomorphism from $S(\ell)$ into $\bb R$, which is injective since $\Gamma$ acts freely on $\Delta$.
The set $\{|x| : x\in\Gamma\}$ of translation lengths of elements of $\Gamma$ is discrete.
Therefore the map $z\mapsto \rho(z)$ is an isomorphism from $S(\ell)$ onto a discrete subgroup of $\bb R$. The result follows.
\end{proof}

\section{The graph of groups and proof of the main theorem}\label{Bass-Serre}

The results of the previous section show that $G=\Z_\Gamma(g)/\langle g \rangle$ acts without inversion on the tree $\cl T = \cl T_g^*$.  Recall the construction of the corresponding graph of groups.
Since $G$ acts without inversion, we can choose an orientation on the edges which is invariant under $G$ \cite[I.3.1]{ser}. This orientation consists of a partition of the set of edges and a bijective involution $e\mapsto \overline e$ which interchanges the two components. Each directed edge $e$ has an initial vertex $o(e)$ and a terminal vertex $t(e)$ such that $o(\overline e)=t(e)$.
The quotient graph $X=G\backslash\cl T$ has vertex set $V$ and directed edge set $E$.  There is an induced involution $e\mapsto \overline e : E \to E$ and there are maps $o,t : E \to V$ with $o(\overline e)=t(e)$.

 Choose a maximal tree $T$ in $X$, and a lifting $\sigma : T \to \cl T$ to a subtree of $\cl T$. For each $v\in V$, let $G_v$ be the stabilizer of $\sigma(v)$ in $G$. If $e$ is an edge of $T$, define  $G_e=G_{\overline e}$ to be the stabilizer of $\sigma(e)$ in $G$ and define $\varphi_e : G_e \to G_{t(e)}$ to be the inclusion map. For each $e\in E_+- T$, define $\sigma(e)$ to be the directed edge of $\cl T$ lying over $e$ and with $o(\sigma(e))=\sigma(o(e))$, and define $G_e$ to be the stabilizer of $\sigma(e)$. We take $\varphi_{\overline e}$ to be the inclusion map, but as $\sigma(\overline e)\not=\overline{\sigma(e)}$, we choose $\gamma_e\in G$ with $\gamma_e (\sigma(\overline{e}))=\overline{\sigma(e)}$. Then $\varphi_e$ is defined as conjugation by $\gamma_e$.

Now $(X, G)$ is a graph of groups in the sense of \cite[I.5]{ser}.
By Bass-Serre theory \cite[I.5.4]{ser}, $G$ is isomorphic to the fundamental group $\pi_1(X, G)$, which is generated by the groups $G_v$ and the elements $\gamma_e$ subject to the following relations, for $e\in E$.

\begin{itemize}
  \item $\gamma_{\overline e}=\gamma_e^{-1}$;
  \item $\gamma_e=1, \quad e\in T$;
  \item $\gamma_e\varphi_e(x)\gamma_e^{-1}=\varphi_{\overline e}(x), \quad x\in G_e$.
\end{itemize}

The next result, which is an immediate consequence of the definitions, is useful in computing the fundamental group in the examples of the next section.

\begin{lemma}\label{edgemap}
  If $\varphi_e$ is surjective then $\gamma_eG_{t(e)}\gamma_e^{-1}\subseteq G_{o(e)}$.
  In particular, if $e\in T$, then $G_{t(e)}\subseteq G_{o(e)}$.
\end{lemma}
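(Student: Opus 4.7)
The plan is to simply unwind the defining data of the edge maps, then combine this with the tautology that an edge stabilizer sits inside each of its endpoint stabilizers.

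First I would record the elementary fact that $G_e \subseteq G_{o(e)}$ for every $e \in E$. Indeed, $G_e$ was defined as the stabilizer of the directed edge $\sigma(e)$ in $\cl T$; any such element fixes both endpoints of $\sigma(e)$, and in particular it fixes $o(\sigma(e)) = \sigma(o(e))$, which by definition is stabilized by $G_{o(e)}$. This containment does not depend on whether $e$ lies in the spanning tree $T$.

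Next I would make explicit the formula for $\varphi_e$. When $e \in T$, $\gamma_e = 1$ and $\varphi_e$ is inclusion. When $e \in E_+ - T$, the third defining relation $\gamma_e \varphi_e(x) \gamma_e^{-1} = \varphi_{\overline e}(x)$, together with $\varphi_{\overline e}$ being inclusion, forces $\varphi_e(x) = \gamma_e^{-1} x \gamma_e$. Assuming $\varphi_e$ surjective, every $h \in G_{t(e)}$ has the form $h = \gamma_e^{-1} x \gamma_e$ for some $x \in G_e$, hence $\gamma_e h \gamma_e^{-1} = x \in G_e \subseteq G_{o(e)}$ by the first step. This yields
\[
\gamma_e G_{t(e)} \gamma_e^{-1} \ \subseteq\ G_e\ \subseteq\ G_{o(e)},
\]
which is the first claim. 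Specialising to $e \in T$ gives $\gamma_e = 1$ and hence $G_{t(e)} \subseteq G_{o(e)}$.

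There is no real obstacle here, since the result is entirely formal; the only mild point of care is tracking the direction of the conjugation that defines $\varphi_e$, which is pinned down by the relations listed just above the lemma.
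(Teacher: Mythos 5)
Your proof is correct and is essentially the paper's argument: the paper states this lemma without proof as an immediate consequence of the definitions, and your unwinding of $\varphi_e$ via the relation $\gamma_e\varphi_e(x)\gamma_e^{-1}=\varphi_{\overline e}(x)$ together with the tautology $G_e\subseteq G_{o(e)}$ is exactly that. (For completeness you might also note the remaining orientation case $\overline e\in E_+-T$, where $\varphi_e$ itself is the inclusion; it follows by the same relation, since surjectivity gives $\gamma_e G_{t(e)}\gamma_e^{-1}=\gamma_e\varphi_e(G_e)\gamma_e^{-1}=\varphi_{\overline e}(G_e)\subseteq G_{t(\overline e)}=G_{o(e)}$.)
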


\bigskip

\noindent\textit{Proof of Theorem \ref{main}}.
This follows from the results of the previous section.
 The graph $X$ has finitely many vertices by Lemma \ref{vertex orbit},
and finitely many edges by Remark \ref{valency}. The stabilizer in $G$ of any vertex of $\cl T$ is a finite cyclic group by
Lemma \ref{vertex stabilizer}. The same is true of edge stabilizers, since they are subgroups of vertex stabilizers.
\qed

\section{Examples in an $\widetilde A_2$ group}\label{examples}

A class of groups which act regularly on the vertices of a building of type $\widetilde A_2$ has been studied in \cite{cmsz}.
Consider the group C.1 of \cite{cmsz}. This group was originally defined in \cite{mum}: see also \cite[Theorem 2.2.1]{ko}, \cite{gm}.
It is a torsion free lattice in $\PGL_3({\bb Q}_2)$, with seven generators
$x_0, x_1, \dots, x_6$
and seven relators
$$
x_0x_0x_6\quad
x_0x_2x_3\quad
x_1x_2x_6\quad
x_1x_3x_5\quad
x_1x_5x_4\quad
x_2x_4x_5\quad
x_3x_4x_6
$$
The $1$-skeleton of  the Bruhat-Tits building $\Delta$ of $\PGL_3({\bb Q}_2)$ is the Cayley graph of $\Gamma$ relative to this set of generators.
Vertices in $\Delta$ are identified with elements of $\Gamma$ and the directed edge $[a,ax]$, where $a\in\Gamma$, is
labelled by the generator $x$. To facilitate computations, it is convenient to tabulate,
for each $i$, the values of $j$ for which there is a relation of the form $x_ix_jx_k=1$.
\bigskip

\centerline{
\begin{tabular}{|c|c|c|c|c|c|c|c|}
\hline
$i$  &  0 & 1 & 2 & 3 & 4 & 5 & 6  \\
$j$  &  0, 2, 6 & 2, 3, 5 & 3, 4, 6 & 0, 4, 5 & 1, 5, 6 &
1, 2, 4 & 0, 1, 3   \\\hline
\end{tabular}
}
\bigskip

\subsection{Example: the tree $\cl T_g^*$ for $g=x_0x_5$}
The element $g$ has an axis $\ell$ passing through $1, g, g^2, \dots$, and $g$ acts on $\ell$ by translation of distance $|g|=2$.
We seek all possible axial walls for $g$ which are neighbours of $\ell$.
By Remark \ref{start1}, any strip containing $\ell$ and another axial wall must have periodic labelling of period 2.
The entire strip is completely determined by $\ell$ and a single triangle containing a fixed edge of $\ell$.
Therefore there are at most three such strips.
In fact, there are exactly three, corresponding to three edges in $\cl T_g$ joining $\ell$
to vertices $\ell_1, \ell_2, \ell_3$.
The three cases are illustrated on the left of Figure \ref{3neighbours}, where the edge label $j$ stands for the generator $x_j$.
The stabilizer in $\Z_\Gamma(g)$ of each strip is the subgroup $\langle g \rangle$, and this acts upon the strip
by translation. In each case, a fundamental domain for this action is shaded.

\refstepcounter{picture}
\begin{figure}[htbp]\label{3neighbours}
\hfil
\centerline{
\beginpicture
\setcoordinatesystem units  <1.4cm, .8cm>
\setplotarea  x from -1.2 to 0.1,  y from -1 to 5        
\arrow <8pt> [.2, .67] from   -1 0 to   -1 0.2
\arrow <8pt> [.2, .67] from    -0.5 -0.5 to -0.7 -0.7
\arrow <8pt> [.2, .67] from   -0.5 0.5 to -0.3 0.3
\arrow <8pt> [.2, .67] from    0 0.8 to  0 1
\arrow <8pt> [.2, .67] from    0 2.8 to  0 3
\putrule from -1 -1 to -1 1
\setlinear \plot -1 -1 0 0 -1 1 /
\put {$_{0}$}  at  -1.2 0
\put {$_{6}$} at  -0.4 -0.7
\put {$_{0}$}  at  -0.6 0.3
\put {$_{2}$}  at   0.2 1
\put {$_{2}$}  at   0.2 3
\arrow <8pt> [.2, .67] from   -1 2  to   -1 2.2
\arrow <8pt> [.2, .67] from    -0.5 1.5 to  -0.7 1.3
\arrow <8pt> [.2, .67] from    -0.5 2.5 to  -0.3 2.3
\put {$_{3}$}  at  -0.4 1.3
\put {$_{1}$}  at  -0.5 2.8
\put {$_{5}$}  at  -1.2 2
\putrule from  -1 1 to  -1 3
\putrule from 0 -1 to  0 5  
\setlinear \plot -1 1  0 2  -1 3 /
\arrow <8pt> [.2, .67] from   -1 4  to   -1 4.2
\arrow <8pt> [.2, .67] from    -0.5 3.5 to  -0.7 3.3
\arrow <8pt> [.2, .67] from    -0.5 4.5 to  -0.3 4.3
\put {$_{6}$}  at  -0.5 3.8
\put {$_{0}$}  at  -0.5 4.8
\put {$_{0}$}  at  -1.2 4
\putrule from  -1 3 to  -1 5
\setlinear \plot  -1 3  0 4  -1 5 /
\put {$\ell$}  at  -1 5.5
\put {$\ell_1$}  at  0 5.5
\setshadegrid span <1.5pt>
\vshade  -1  -1  3  <,z,,>   0   0 4  /   
\setcoordinatesystem units  <1.4cm, .8cm>  point at -2 0
\setplotarea  x from -1.2 to 0.1,  y from -1 to 1        
\arrow <8pt> [.2, .67] from   -1 0 to   -1 0.2
\arrow <8pt> [.2, .67] from    -0.5 -0.5 to -0.7 -0.7
\arrow <8pt> [.2, .67] from   -0.5 0.5 to -0.3 0.3
\arrow <8pt> [.2, .67] from    0 0.8 to  0 1
\arrow <8pt> [.2, .67] from    0 2.8 to  0 3
\putrule from -1 -1 to -1 1
\setlinear \plot -1 -1 0 0 -1 1 /
\put {$_{0}$}  at  -1.2 0
\put {$_{0}$} at  -0.4 -0.7
\put {$_{6}$}  at  -0.6 0.3
\put {$_{3}$}  at   0.2 1
\put {$_{3}$}  at   0.2 3
\arrow <8pt> [.2, .67] from   -1 2  to   -1 2.2
\arrow <8pt> [.2, .67] from    -0.5 1.5 to  -0.7 1.3
\arrow <8pt> [.2, .67] from    -0.5 2.5 to  -0.3 2.3
\put {$_{4}$}  at  -0.4 1.3
\put {$_{2}$}  at  -0.5 2.8
\put {$_{5}$}  at  -1.2 2
\putrule from  -1 1 to  -1 3
\putrule from 0 -1 to  0 5  
\setlinear \plot -1 1  0 2  -1 3 /
\arrow <8pt> [.2, .67] from   -1 4  to   -1 4.2
\arrow <8pt> [.2, .67] from    -0.5 3.5 to  -0.7 3.3
\arrow <8pt> [.2, .67] from    -0.5 4.5 to  -0.3 4.3
\put {$_{0}$}  at  -0.5 3.8
\put {$_{6}$}  at  -0.5 4.8
\put {$_{0}$}  at  -1.2 4
\putrule from  -1 3 to  -1 5
\setlinear \plot  -1 3  0 4  -1 5 /
\put {$\ell$}  at  -1 5.5
\put {$\ell_2$}  at  0 5.5
\setshadegrid span <1.5pt>
\vshade  -1  -1  3  <,z,,>   0   0 4  /   
\setcoordinatesystem units  <1.4cm, .8cm>  point at -4 0
\setplotarea  x from -1.2 to 0.1,  y from -1 to 5        
\arrow <8pt> [.2, .67] from   -1 0 to   -1 0.2
\arrow <8pt> [.2, .67] from    -0.5 -0.5 to -0.7 -0.7
\arrow <8pt> [.2, .67] from   -0.5 0.5 to -0.3 0.3
\arrow <8pt> [.2, .67] from    0 0.8 to  0 1
\arrow <8pt> [.2, .67] from    0 2.8 to  0 3
\putrule from -1 -1 to -1 1
\setlinear \plot -1 -1 0 0 -1 1 /
\put {$_{0}$}  at  -1.2 0
\put {$_{3}$} at  -0.4 -0.7
\put {$_{2}$}  at  -0.6 0.3
\put {$_{6}$}  at   0.2 1
\put {$_{6}$}  at   0.2 3
\arrow <8pt> [.2, .67] from   -1 2  to   -1 2.2
\arrow <8pt> [.2, .67] from    -0.5 1.5 to  -0.7 1.3
\arrow <8pt> [.2, .67] from    -0.5 2.5 to  -0.3 2.3
\put {$_{1}$}  at  -0.4 1.3
\put {$_{4}$}  at  -0.5 2.8
\put {$_{5}$}  at  -1.2 2
\putrule from  -1 1 to  -1 3
\putrule from 0 -1 to  0 5  
\setlinear \plot -1 1  0 2  -1 3 /
\arrow <8pt> [.2, .67] from   -1 4  to   -1 4.2
\arrow <8pt> [.2, .67] from    -0.5 3.5 to  -0.7 3.3
\arrow <8pt> [.2, .67] from    -0.5 4.5 to  -0.3 4.3
\put {$_{3}$}  at  -0.5 3.8
\put {$_{2}$}  at  -0.5 4.8
\put {$_{0}$}  at  -1.2 4
\putrule from  -1 3 to  -1 5
\setlinear \plot  -1 3  0 4  -1 5 /
\put {$\ell$}  at  -1 5.5
\put {$\ell_3$}  at  0 5.5
\setshadegrid span <1.5pt>
\vshade  -1  -1  3  <,z,,>   0   0 4  /   
\setcoordinatesystem units  <1.4cm, .8cm>  point at -7 0
\setplotarea  x from -1.2 to 0.1,  y from -1 to 5        
\arrow <8pt> [.2, .67] from   -1 0 to   -1 0.2
\arrow <8pt> [.2, .67] from    -0.5 -0.5 to -0.7 -0.7
\arrow <8pt> [.2, .67] from   -0.5 0.5 to -0.3 0.3
\arrow <8pt> [.2, .67] from    0 0.8 to  0 1
\arrow <8pt> [.2, .67] from    0 2.8 to  0 3
\putrule from -1 -1 to -1 1
\setlinear \plot -1 -1 0 0 -1 1 /
\put {$_{6}$}  at  -1.2 0
\put {$_{0}$} at  -0.4 -0.7
\put {$_{0}$}  at  -0.6 0.3
\put {$_{6}$}  at   0.2 1
\put {$_{6}$}  at   0.2 3
\arrow <8pt> [.2, .67] from   -1 2  to   -1 2.2
\arrow <8pt> [.2, .67] from    -0.5 1.5 to  -0.7 1.3
\arrow <8pt> [.2, .67] from    -0.5 2.5 to  -0.3 2.3
\put {$_{0}$}  at  -0.4 1.3
\put {$_{0}$}  at  -0.5 2.8
\put {$_{6}$}  at  -1.2 2
\putrule from  -1 1 to  -1 3
\putrule from 0 -1 to  0 5  
\setlinear \plot -1 1  0 2  -1 3 /
\arrow <8pt> [.2, .67] from   -1 4  to   -1 4.2
\arrow <8pt> [.2, .67] from    -0.5 3.5 to  -0.7 3.3
\arrow <8pt> [.2, .67] from    -0.5 4.5 to  -0.3 4.3
\put {$_{0}$}  at  -0.5 3.8
\put {$_{0}$}  at  -0.5 4.8
\put {$_{6}$}  at  -1.2 4
\putrule from  -1 3 to  -1 5
\setlinear \plot  -1 3  0 4  -1 5 /
\put {$\ell_3$}  at  -1 5.5
\put {$\ell_4$}  at  0 5.5
\setshadegrid span <1.5pt>
\vshade  -1  -1  1  <,z,,>   0   0 0  /
\endpicture
}
\caption{}
\end{figure}

Continuing to extend the tree, it turns out that there is only one further edge emanating from $\ell_3$ to a vertex $\ell_4$
as illustrated on the right of Figure \ref{3neighbours}.
As indicated by their edge labels, $\ell_3$, $\ell_4$ are in the same $\Gamma$-orbit, hence in the same $\Z_\Gamma(g)$-orbit, by Lemma \ref{Z-orbit}.
The element $x_3^{-1}x_0x_3$ acts with inversion on the
edge $[\ell_3, \ell_4]$ of $\cl T_g$, since it acts by glide reflection on the corresponding strip,
in which a fundamental domain is shaded. We therefore divide $[\ell_3, \ell_4]$ in two
by inserting a vertex $\ell_4^*$ at its midpoint, corresponding to the median line of the strip.
Since $x_3^{-1}x_0x_3$ acts by translation on this median line, it stabilizes $\ell_4^*$.
In fact $S(\ell_4^*)=\langle x_3^{-1}x_0x_3 \rangle\cong \bb Z$.

\refstepcounter{picture}
\begin{figure}[htbp]\label{55}
\hfil
\centerline{
\beginpicture
\setcoordinatesystem units  <1.4cm, .8cm>
\setplotarea  x from -1.2 to 0.1,  y from -1 to 5        
\arrow <8pt> [.2, .67] from   -1 0 to   -1 0.2
\arrow <8pt> [.2, .67] from    -0.5 -0.5 to -0.7 -0.7
\arrow <8pt> [.2, .67] from   -0.5 0.5 to -0.3 0.3
\arrow <8pt> [.2, .67] from    0 0.8 to  0 1
\arrow <8pt> [.2, .67] from    0 2.8 to  0 3
\putrule from -1 -1 to -1 1
\setlinear \plot -1 -1 0 0 -1 1 /
\put {$_{2}$}  at  -1.2 0
\put {$_{5}$} at  -0.4 -0.7
\put {$_{4}$}  at  -0.6 0.3
\put {$_{1}$}  at   0.2 1
\put {$_{1}$}  at   0.2 3
\arrow <8pt> [.2, .67] from   -1 2  to   -1 2.2
\arrow <8pt> [.2, .67] from    -0.5 1.5 to  -0.7 1.3
\arrow <8pt> [.2, .67] from    -0.5 2.5 to  -0.3 2.3
\put {$_{5}$}  at  -0.4 1.3
\put {$_{4}$}  at  -0.5 2.8
\put {$_{2}$}  at  -1.2 2
\putrule from  -1 1 to  -1 3
\putrule from 0 -1 to  0 5  
\setlinear \plot -1 1  0 2  -1 3 /
\arrow <8pt> [.2, .67] from   -1 4  to   -1 4.2
\arrow <8pt> [.2, .67] from    -0.5 3.5 to  -0.7 3.3
\arrow <8pt> [.2, .67] from    -0.5 4.5 to  -0.3 4.3
\put {$_{5}$}  at  -0.5 3.8
\put {$_{4}$}  at  -0.5 4.8
\put {$_{2}$}  at  -1.2 4
\putrule from  -1 3 to  -1 5
\setlinear \plot  -1 3  0 4  -1 5 /
\put {$\ell_1$}  at  -1 5.5
\put {$\ell_5$}  at  0 5.5
\setshadegrid span <1.5pt>
\vshade  -1  -1  1  <,z,,>   0   0 2  /  
\setcoordinatesystem units  <1.4cm, .8cm>  point at -2 0
\setplotarea  x from -1.2 to 0.1,  y from -1 to 5        
\arrow <8pt> [.2, .67] from   -1 0 to   -1 0.2
\arrow <8pt> [.2, .67] from    -0.5 -0.5 to -0.7 -0.7
\arrow <8pt> [.2, .67] from   -0.5 0.5 to -0.3 0.3
\arrow <8pt> [.2, .67] from    0 0.8 to  0 1
\arrow <8pt> [.2, .67] from    0 2.8 to  0 3
\putrule from -1 -1 to -1 1
\setlinear \plot -1 -1 0 0 -1 1 /
\put {$_{3}$}  at  -1.2 0
\put {$_{1}$} at  -0.4 -0.7
\put {$_{5}$}  at  -0.6 0.3
\put {$_{4}$}  at   0.2 1
\put {$_{4}$}  at   0.2 3
\arrow <8pt> [.2, .67] from   -1 2  to   -1 2.2
\arrow <8pt> [.2, .67] from    -0.5 1.5 to  -0.7 1.3
\arrow <8pt> [.2, .67] from    -0.5 2.5 to  -0.3 2.3
\put {$_{1}$}  at  -0.4 1.3
\put {$_{5}$}  at  -0.5 2.8
\put {$_{3}$}  at  -1.2 2
\putrule from  -1 1 to  -1 3
\putrule from 0 -1 to  0 5  
\setlinear \plot -1 1  0 2  -1 3 /
\arrow <8pt> [.2, .67] from   -1 4  to   -1 4.2
\arrow <8pt> [.2, .67] from    -0.5 3.5 to  -0.7 3.3
\arrow <8pt> [.2, .67] from    -0.5 4.5 to  -0.3 4.3
\put {$_{1}$}  at  -0.5 3.8
\put {$_{5}$}  at  -0.5 4.8
\put {$_{3}$}  at  -1.2 4
\putrule from  -1 3 to  -1 5
\setlinear \plot  -1 3  0 4  -1 5 /
\put {$\ell_2$}  at  -1 5.5
\put {$\ell_6$}  at  0 5.5
\setshadegrid span <1.5pt>
\vshade  -1  -1  1  <,z,,>   0   0 2  /  
\setcoordinatesystem units  <1.4cm, .8cm>  point at -4 0
\setplotarea  x from -1.2 to 0.1,  y from -1 to 5        
\arrow <8pt> [.2, .67] from   -1 0 to   -1 0.2
\arrow <8pt> [.2, .67] from    -0.5 -0.5 to -0.7 -0.7
\arrow <8pt> [.2, .67] from   -0.5 0.5 to -0.3 0.3
\arrow <8pt> [.2, .67] from    0 0.8 to  0 1
\arrow <8pt> [.2, .67] from    0 2.8 to  0 3
\putrule from -1 -1 to -1 1
\setlinear \plot -1 -1 0 0 -1 1 /
\put {$_{1}$}  at  -1.2 0
\put {$_{6}$} at  -0.4 -0.7
\put {$_{2}$}  at  -0.6 0.3
\put {$_{4}$}  at   0.2 1
\put {$_{4}$}  at   0.2 3
\arrow <8pt> [.2, .67] from   -1 2  to   -1 2.2
\arrow <8pt> [.2, .67] from    -0.5 1.5 to  -0.7 1.3
\arrow <8pt> [.2, .67] from    -0.5 2.5 to  -0.3 2.3
\put {$_{5}$}  at  -0.4 1.3
\put {$_{3}$}  at  -0.5 2.8
\put {$_{1}$}  at  -1.2 2
\putrule from  -1 1 to  -1 3
\putrule from 0 -1 to  0 5  
\setlinear \plot -1 1  0 2  -1 3 /
\arrow <8pt> [.2, .67] from   -1 4  to   -1 4.2
\arrow <8pt> [.2, .67] from    -0.5 3.5 to  -0.7 3.3
\arrow <8pt> [.2, .67] from    -0.5 4.5 to  -0.3 4.3
\put {$_{6}$}  at  -0.5 3.8
\put {$_{2}$}  at  -0.5 4.8
\put {$_{1}$}  at  -1.2 4
\putrule from  -1 3 to  -1 5
\setlinear \plot  -1 3  0 4  -1 5 /
\put {$\ell_5$}  at  -1 5.5
\put {$\ell_7$}  at  0 5.5
\setshadegrid span <1.5pt>
\vshade  -1  -1  3  <,z,,>   0   0 4  /   
\endpicture
}
\caption{}
\end{figure}

Continue to build the tree $\cl T_g^*$, adding vertices $\ell_5$, $\ell_6$, $\ell_7$ (as in Figure \ref{55}), noting
 that $\ell_6$, $\ell_7$ are in the same $\Z_\Gamma(g)$-orbit, by Lemma \ref{Z-orbit}.
The action of $G=\Z_\Gamma(g)/\langle g \rangle$ on $\cl T_g^*$ is described by the quotient graph of groups in Figure \ref{GG}.
The vertices of the quotient graph $G\backslash \cl T_g^*$ are labelled according to their inverse images in $\cl T_g^*$.
For example the stabilizer of the vertex $\ell_1$ is generated by an element $h$ conjugate to $x_2$: in fact $h=x_6^{-1}x_2x_6$.
The same applies to all vertices in the same $\Gamma$ orbit as $\ell_1$; that is all axes of $g$ with the periodic edge labelling $(\dots 2,2,2 \dots)$.
Thus the vertex $G.\ell_1$ of $G\backslash \cl T_g^*$ has label $(2)$.
Similarly, the label $[0]$ is attached to the vertex $G.\ell_4^*$. A square bracket is used to indicate
that $\ell_4^*$ is a median line rather than a wall.
The vertex and edge groups are shown in Figure \ref{GG}, except where they are trivial.
For example, attached to the vertex $(2)$ is the group $S(\ell_1)/\langle g \rangle=\langle x_6^{-1}x_2x_6 \rangle/\langle g \rangle\cong \bb Z/2\bb Z$.

Computing the fundamental group of this graph of groups as described in Section \ref{Bass-Serre}, using Lemma \ref{edgemap}, we see that
$$\Z_\Gamma(g)/\langle g \rangle\cong \bb Z * (\bb Z/2\bb Z)^{*2} * (\bb Z/4\bb Z).$$

\refstepcounter{picture}
\begin{figure}[htbp]
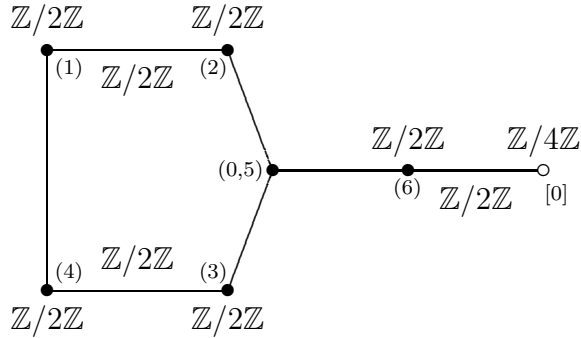
\label{GG}
\centerline{
\beginpicture
\setcoordinatesystem units <0.6cm,0.8cm>   
\setplotarea x from -3 to 9, y from -2.5  to 2.5         
\putrule from -2 -2 to 2 -2
\putrule from 2 2 to -2 2
\putrule from -2 2 to -2 -2
\setlinear \plot  2 2  3 0  2 -2 /
\putrule from  3 0  to  8.865 0
\put{$\bullet$}  at  -2 -2
\put{$\bullet$}  at  2 2
\put{$\bullet$}  at  2 -2
\put{$\bullet$}  at  -2 2
\put{$\bullet$} at   3 0
\put{$\bullet$}  at 6 0
\put{$\circ$}  at 9 0
\put{$_{(4)}$}  at  -1.5 -1.7
\put{$_{(2)}$}  at  1.7 1.7
\put{$_{(3)}$}  at  1.7 -1.7
\put{$_{(1)}$}  at  -1.5 1.7
\put{$_{(0,5)}$} at   2.3 0
\put{$_{(6)}$}  at 6 -0.3
\put{$_{[0]}$}  at 9.3 -0.4
\put{$\bb Z/2\bb Z$}  at  -2 -2.5
\put{$\bb Z/2\bb Z$}  at  2 2.5
\put{$\bb Z/2\bb Z$}  at  2 -2.5
\put{$\bb Z/2\bb Z$}  at  -2 2.5
\put{$\bb Z/2\bb Z$}  at 6 0.5
\put{$\bb Z/4\bb Z$}  at 9 0.5
\put{$\bb Z/2\bb Z$}  at  0 1.5
\put{$\bb Z/2\bb Z$}  at  0 -1.5
\put{$\bb Z/2\bb Z$}  at  7.5 -0.5
\endpicture
}
\caption{Graph of groups with fundamental group $\bb Z * (\bb Z/2\bb Z)^{*2} * (\bb Z/4\bb Z)$}
\end{figure}

\subsection{Example: the graph of groups for $g=x_0x_1x_4$}
In this case the corresponding graph of groups is illustrated in Figure \ref{GG2}.
This may be verified as before, and the notation used for the vertices is the same.
For example, the vertex labelled $[2,3,5]$ is the image of a vertex of $\cl T_g^*$
 which is stabilized by an element conjugate to $x_2x_3x_5$. Computing the fundamental group of this graph of groups gives
$$\Z_\Gamma(g)/\langle g \rangle\cong \bb Z^{*2} * (\bb Z/2\bb Z)^{*5}.$$

\refstepcounter{picture}
\begin{figure}[htbp]
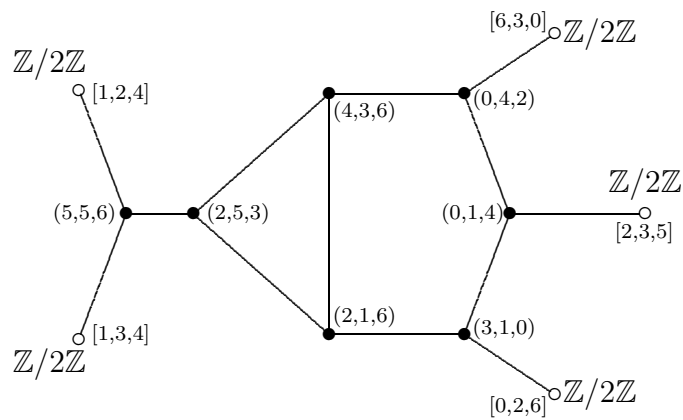
\label{GG2}
\centerline{
\beginpicture
\setcoordinatesystem units <0.6cm,0.8cm>   
\setplotarea x from -4 to 4, y from -2.5  to 3.5         
\putrule from -1 -2 to 2 -2
\putrule from 2 2 to -1 2
\putrule from -1 2 to -1 -2
\setlinear \plot  2 2  3 0  2 -2 /
\setlinear \plot  -1 2  -4 0  -1 -2 /
\putrule from  3 0  to  5.865 0
\putrule from  -4 0  to  -5.5 0
\setlinear \plot  -6.5 2   -5.5 0  -6.5 -2 /
\setlinear \plot 2 2    3.9 2.95  /
\setlinear \plot  2 -2    3.9 -2.95  /
\put{$\bullet$}  at  -1 -2
\put{$\bullet$}  at  2 2
\put{$\bullet$}  at  2 -2
\put{$\bullet$}  at  -1 2
\put{$\bullet$} at   3 0
\put{$\bullet$} at   -4 0
\put{$\bullet$} at   -5.5 0
\put{$\circ$}  at 6 0
\put{$\circ$}  at 4 3
\put{$\circ$}  at 4 -3
\put{$\circ$}  at -6.55 2.05
\put{$\circ$}  at -6.55 -2.09
\put{$_{(2,1,6)}$}  at  -0.2 -1.7
\put{$_{(0,4,2)}$}  at  2.9 1.9
\put{$_{(3,1,0)}$}  at  2.9 -1.9
\put{$_{(4,3,6)}$}  at  -0.2 1.7
\put{$_{(0,1,4)}$} at   2.2 0
\put{$_{(2,5,3)}$} at   -3 0
\put{$_{(5,5,6)}$} at   -6.4 0
\put{$_{[1,2,4]}$} at   -5.6 2
\put{$_{[1,3,4]}$} at   -5.6 -2
\put{$_{[2,3,5]}$} at   6 -0.3
\put{$_{[6,3,0]}$} at   3.2 3.2
\put{$_{[0,2,6]}$} at   3.2 -3.2
\put{$\bb Z/2\bb Z$}  at  -7.2 2.5
\put{$\bb Z/2\bb Z$}  at  -7.2 -2.5
\put{$\bb Z/2\bb Z$}  at 6 0.5
\put{$\bb Z/2\bb Z$}  at 5 3
\put{$\bb Z/2\bb Z$}  at 5 -3
\endpicture
}
\caption{Graph of groups with fundamental group $\bb Z^{*2} * (\bb Z/2\bb Z)^{*5}$}
\end{figure}

   Other groups for which such computations are possible are those which act regularly on the vertices of a building of type $\widetilde A_1 \times \widetilde A_1$, that is a product of trees. Explicit presentations of such groups are given in \cite{kr}. The groups studied in \cite{rr} have the property that the centralizer of any non-trivial element is either $\bb Z$ or $\bb Z^2$ (with both possibilities occurring).

\end{document}